\newtheorem{de}{Definition}
\newtheorem{ex}{Example}
\newtheorem{lem}{Lemma}
\newtheorem{prop}{Proposition}
\newtheorem{rem}{Remark}
\newtheorem{theo}{Theorem}
\newtheorem{pt}{Point}
\def\ie{\textit{i.e.}}
\DeclareMathOperator{\CC}{\mathbb{C}}
\DeclareMathOperator{\FF}{\mathbb{F}}
\DeclareMathOperator{\Frac}{Frac}
\DeclareMathOperator{\GL}{GL}
\DeclareMathOperator{\PP}{\mathbb{P}}
\DeclareMathOperator{\SL}{SL}
\DeclareMathOperator{\ZZ}{\mathbb{Z}}
\newcommand{\fullref}[1]{%
  \ref{#1} 
  }
\theoremstyle{plain}
\theoremstyle{definition}
\theoremstyle{remark}
\numberwithin{equation}{section}
\def\phi{\varphi}
\definecolor{liensHypertextes}{rgb}{0,0,0.5}
\definecolor{liensBiblio}{rgb}{0,0.5,0}
\title{A strategy and a new operator to generate covariants in small characteristic}
\author{Florent Ulpat Rovetta}
\begin{document}
\maketitle

\begin{abstract}
\noindent
We present some new results about covariants in small characteristic. 
In section \ref{Sturmfels}, we give a method to construct covariants using an approach similar to Sturmfels. 
We apply our method to find a separating system of covariants for binary quartic in characteristic $3$. 
In the second section, we construct a new operator on covariants when the characteristic is small compared to the degree of the form.
\end{abstract}

\section*{Introduction}
We are interested in the computation of covariants of binary forms in small characteristic in a similar optical of \cite{Basson} and in association with the moduli space of hyperelliptic curves.
Although, in characteristic $0$ or in large characteristic, it is a classic problem (but still formidable in practice when the degree of the form is higher than $10$), in small characteristic, the approach based on transvections (whose are differentials operators) collapse.
In this context, we wanted to test the effectiveness of an alternative method following  \cite{MR0374142} and \cite{MR2667486}. 
The idea is to consider the algebra of covariants of $n$-points of $\PP^1$ under the action of $\GL_2(k)$ (cf. definition \fullref{deCov_nPoints}).
The main advantage of this algebra is that it admits a generator system of covariants explicit and \emph{independent} of the characteristic. 
The covariants for the binary forms are then obtained as the sub-algebra symmetrised by ${\cal S}_n$. 
Although attractive in theory, our current implementation is extremely limited. Indeed the group action ${\cal S}_n$, in the modular case (i.e when the characteristic divide the order of the group), under the covariants of $n$ points fails with generics algorithms of {\tt Magma} as soon as  $n=6$ (cf. \cite[sec.5.2.6]{UlpatRovetta}). 
However, this method was used to determine a separating set (a lower condition than being a generator system, see the definition \fullref{defInvSep}) in the characteristic $3$ binary quartics case. 
Along the way, we realized that some of the invariants/covariants appearing in small characteristic could be derived from classical covariants by a new easy differential operation (cf.  section \ref{subsubsecCovDerMultiple}) under certain conditions that we clarify. 
For the octavics, we get the new invariant of degree $1$ found by  \cite{Basson} and new covariants in degree $4$ and $6$  (cf.  page \pageref{newcov}). 
This operation, while it enriches the algebra of covariants obtained by reduction of those in characteristic zero, is not sufficient to get all the covariants (as we will see in an example in degree  $4$ at the end of this paper).
The question of efficient generation in small characteristic remains wide open.\\

\noindent
{\bf Notations.} 
Let $p$ be a prime number or $0$ and let $k$ be an algebraic closed field of characteristic $p$ and ${\cal C}_n$ the algebra of binary covariants defined over $k$.

\section{A strategy to construct covariants in small characteristic}
\label{Sturmfels}
Except for quartics (cf. \cite[sec 2.10.2]{Basson}) and Igusa invariants for sextic, we do not know a generator system of invariants in every characteristic. 
Thanks to clever reductions and many computations, Basson exhibited in his thesis a ``separating'' system\footnote{\ie: separating the orbits (cf definition \fullref{defInvSep})}. 
He conjectures that it is generator in characteristic $3$ and $7$ for the octavic. 
For the characteristic $p\geq 11$, generator systems are known thanks to the results of \cite{MR2990029}.
To get new results for covariants, we will establish a totally different computation method following\cite{MR0374142} and \cite{MR2667486}. 
We obtain new results for covariants of binary quartic in characteristic $3$.

\subsection{Strategy}
\label{subsec_resultSturmfels}
The study of covariants of $n$-points of $\PP^1(k)$ under the action of $\GL_2(k)$ is a classical framework and we recall here the principal results. 
The main advantage of this work is that there exists an explicit generator system of covariants \emph{independent} of the characteristic. 
Then, the covariants for binary forms come from the sub-algebra symmetrised by ${\cal S}_n$.\\

\noindent
We slightly modify the results of Sturmfels \cite[Chap 3, sec 6]{MR2667486} in order to be valid in every characteristic. 
In the case of invariants, this is exactly Geyer's method \cite{MR0374142}. 
Let $n>1$ be a positive integer. 
Consider the binary form:
\begin{align*}
f(x,z) & = \sum_{k=0}^na_kx^kz^{n-k}\\
         & = (\mu_1x-\nu_1z)(\mu_2x-\nu_2z)\dots(\mu_nx-\nu_nz).
\end{align*}
The `roots' $(\mu_i,\nu_i)$ can be seen as points $(\mu_i,\nu_i)\in\PP^1$.
\begin{de}
Let M be a monomial in ${k}[\mu_1, \nu_1, \mu_2, \nu_2,\dots,\mu_n, \nu_n, \:x,\:z]$ such that:
$$M=\mu_1^{u_1}\mu_2^{u_2}\cdots\mu_n^{u_n}\nu_1^{v_1}\nu_2^{v_2}\cdots\nu_n^{v_n}x^{w_1}z^{w_2}$$
and $P$ be a polynomial in ${k}[\mu_1, \nu_1, \mu_2, \nu_2,\dots,\mu_n, \nu_n, \:x,\:z]$.
We say that:
\begin{itemize}
\item $M$ is \emph{regular of degree} $d$ if $u_1+v_1 = u_2+v_2 = \dots = u_n+v_n = d.$
The integer $d$ is called the \emph{regularity degree} of $M$.
\item $P$ is \emph{regular of degree} $d$ if all of its monomials are regular of degree $d$. 
When $P$ is regular for a degree $d$, we say that $P$ is \emph{regular}. 
\item $P$ is \emph{symmetric} if, for all permutation $\sigma\in S_n$:
$$P(\mu_1, \nu_1, \mu_2, \nu_2,\dots,\mu_n, \nu_n, \:x,\:z) = P(\mu_{\sigma(1)}, \nu_{\sigma(1)}, \mu_{\sigma(2)}, \nu_{\sigma(2)},\dots,\mu_{\sigma(n)}, \nu_{\sigma(n)}, \:x,\:z).$$
\end{itemize}
A regular monomial is \emph{reducible} if it can be expressed as the product of two regular monomials of regularity degree great than equal to $1$.
\end{de}
\noindent
We define the action of $\GL_2({k})$ on ${k}[\mu_1, \nu_1, \mu_2, \nu_2,\dots,\mu_n, \nu_n, \:x,\:z]$ in the following way:
Let $M\in\GL_2({k})$, 
$$
\left(
\begin{array}{c}
\nu_i\\
\mu_i
\end{array}
\right)
\to
\left(
\begin{array}{c}
\overline{\nu}_i\\
\overline{\mu}_i
\end{array}
\right)
= M^{-1}\cdot
\left(
\begin{array}{c}
\nu_i\\
\mu_i
\end{array}
\right)
$$
$$
\left(
\begin{array}{c}
x\\
z
\end{array}
\right)
\to
\left(
\begin{array}{c}
\overline{x}_i\\
\overline{z}_i
\end{array}
\right)
= M^{-1}\cdot
\left(
\begin{array}{c}
x\\
z
\end{array}
\right).
$$
\label{deCov_nPoints}
A regular polynomial $P$ is a \emph{covariant (of $n$ points)}\index{covariant! of $n$ points} if there exists $w\in \ZZ$ such that:
$$P(\overline{\mu}_1, \overline{\nu}_1, \overline{\mu}_2, \overline{\nu}_2,\dots,\overline{\mu}_n, \overline{\nu}_n, \:\overline{x},\:\overline{z}) = \det(M)^wP(\mu_1, \nu_1, \mu_2, \nu_2,\dots,\mu_n, \nu_n, \:x,\:z).$$
$P$ is called an \emph{invariant (of $n$ points)} if it does not depend on $x$ and $z$.
It is easy to define covariants quantities thanks to the brackets. 
Let $1\leq i<j\leq n$, we call \emph{bracket} the following quantities:
\begin{align*}
[ij] &:= \mu_i\nu_j-\nu_i\mu_j,\\
[iu] &:= \mu_ix-\nu_iz.
\end{align*}
\label{deBreg}
The sub-ring ${\cal B}(n)$\nomenclature{${\cal B}(n)$}{The sub-ring of ${k}[\mu_1, \nu_1, \mu_2, \nu_2,\dots,\mu_n, \nu_n, \:x,\:z]$ generated by the brackets, page \pageref{deBreg}} generated by these brackets in  ${k}[\mu_1, \nu_1, \mu_2, \nu_2,\dots,\mu_n, \nu_n, \:x,\:z]$, is called \emph{the bracket ring}\index{the bracket ring}. 
We also denote ${\cal B}_{reg}(n)$ the sub-ring of ${\cal B}(n)$ of polynomial in the brackets which are regular of degree $d$ for $d\geq 0$\nomenclature{${\cal B}_{reg}(n)$}{The sub-ring of ${\cal B}(n)$ of polynomial in the brackets how are regular of degree $0$, page \pageref{deBreg}}. 
The latter is generated by the monomials of the form:
$$\prod_{i<j}[ij]^{m_{ij}},$$
where the integers $m_{ij}$ verify $d = \sum_{j=1}^{i-1} m_{ji} + \sum_{j=i+1}^nm_{ij}$. 
\label{theoBregCovReguliers}
The polynomial ring of regular covariants is equal to ${\cal B}_{reg}(n)$ (consequence of the first fundamental theorem (cf \cite{MR0000255})). 
When the acting group is $\GL_2({\CC})$, the theorem 3.2.1 and the lemma 3.6.5 of \cite{MR2667486} provide a demonstration. 
When the group is arbitrary, the proof is in \cite{MR0422314}.
Note also that \cite[Satz 5]{MR0374142} gives an elementary proof in the case of $\GL_2(k)$.
\noindent
In the section \ref{subsecBreg_n}, we present an example of computation of generators of ${\cal B}_{reg}(n)$. 
What remains to describe the final stage to get the of binary forms
Let:
\begin{align*}
\Psi:{k}[a_0,a_1,\dots,a_n,\:x,\:z] & \to{k}[\mu_1, \nu_1, \mu_2, \nu_2,\dots,\mu_n, \nu_n, \:x,\:z]\\
a_{n-k} &\to (-1)^n\mu_1\cdots\mu_n\cdot\sigma_k(\frac{\nu_1}{\mu_1},\dots,\frac{\nu_n}{\mu_n}).
\end{align*}
$\sigma_k$ represents the $k$-th elementary symmetric polynomial function in $n$ variables. 
The following theorem (from \cite[th3.6.6]{MR2667486}) is an elementary consequence of the previous theorem.

\noindent
\label{BregSym}
Let ${\cal B}_{reg,sym}(n)$ be the sub-ring of ${\cal B}_{reg}(n)$ of the polynomial in the brackets which are symmetric.\nomenclature{${\cal B}_{reg,sym}(n)$}{The sub-ring of ${\cal B}_{reg}(n)$ of the polynomial in the brackets who are symmetric, page \pageref{BregSym}}
\begin{theo}
\label{theo_covbrakets}
The mapping $\Psi$ is an isomorphism between the ring of covariants of binary forms ${k}[a_0,\dots,a_n,x,z]^{\SL_2(\overline{k})}$ and the sub-ring  ${\cal B}_{reg,sym}(n)$ of symmetric and regular polynomial brackets functions of ${k}[\mu_1, \nu_1, \mu_2, \nu_2,\dots,\mu_n, \nu_n, \:x,\:z]$. 
If $C(a_0,\dots,a_n)$ is a covariant of degree $d$ and order $r$ then $\Psi(C)$ is a symmetric polynomial bracket function such that:
\begin{enumerate}
\item in every monomial of $\Psi(C)$,the index  $1$, $2$,\dots, $n$ appears $d$ times,
\item  in every monomial of $\Psi(C)$, the letter $u$ appears $r$ times.
\end{enumerate}
\end{theo}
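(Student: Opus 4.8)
The plan is to reduce Theorem~\ref{theo_covbrakets} to the already-established fact (stated just above as \ref{theoBregCovReguliers}) that $\mathcal{B}_{reg}(n)$ \emph{is} the ring of regular covariants of $n$ points, and then to analyse how the substitution $\Psi$ transports the two gradings (degree and order) of classical covariants. First I would set up the dictionary between the two pictures. A covariant $C(a_0,\dots,a_n,x,z)$ of binary forms pulls back under $\Psi$ to a polynomial $\Psi(C)$ in the $\mu_i,\nu_i,x,z$; the key observation is that $\Psi(a_{n-k})$ is, up to sign, $\mu_1\cdots\mu_n\,\sigma_k(\nu_1/\mu_1,\dots,\nu_n/\mu_n)=\sum_{|S|=k}\bigl(\prod_{i\notin S}\mu_i\bigr)\bigl(\prod_{i\in S}\nu_i\bigr)$, which is manifestly symmetric in the indices $1,\dots,n$ and \emph{regular of regularity degree exactly $1$}: every index $i$ contributes exactly one factor ($\mu_i$ or $\nu_i$) to each monomial. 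Hence $\Psi$ maps $k[a_0,\dots,a_n,x,z]$ into the symmetric, regular part of $k[\mu_i,\nu_i,x,z]$, with a covariant of degree $d$ in the $a_j$ landing in regularity degree $d$ (each of the $d$ factors $a_j$ contributes one occurrence of each index). That is exactly assertion~(1); assertion~(2) is immediate because $\Psi$ fixes $x$ and $z$, so the $x$-plus-$z$ degree — i.e.\ the number of occurrences of the letter $u$ in bracket form, since each $[iu]$ carries one $x$ or one $z$ and each $[ij]$ carries none — equals the order $r$.

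Next I would prove that $\Psi$, composed with the map sending a binary form to its linear factorisation, identifies the $\SL_2$-covariants of forms with the $\GL_2$-covariants of $n$ ordered points that happen to be symmetric. The point is that $\Psi$ is precisely the comorphism of the classifying map $(\PP^1)^n\to \PP^n$, $\,((\mu_i,\nu_i))_i\mapsto \prod_i(\mu_i x-\nu_i z)$, which is $\GL_2$-equivariant and $S_n$-invariant and generically finite of degree $n!$ onto its image. Thus $\Psi$ is injective (a nonzero polynomial in the $a_j$ cannot vanish on the dense image), it intertwines the $\SL_2$-action on coefficients with the induced $\GL_2$-action on the roots, and its image consists of $S_n$-symmetric functions. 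Conversely, given an $S_n$-symmetric regular bracket covariant $Q$, I would invoke the classical fact that symmetric polynomial functions of the $\mu_i,\nu_i$ that are regular of each index descend to polynomial functions of the elementary symmetric functions, i.e.\ of the $a_j$ via $\Psi$; combined with $\GL_2$-equivariance this produces an $\SL_2$-covariant $C$ with $\Psi(C)=Q$. Surjectivity onto $\mathcal{B}_{reg,sym}(n)$ follows.

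The main obstacle — and the place where the small-characteristic hypothesis must be handled with care — is the descent step: over a field of characteristic dividing $n!$, the passage from $S_n$-invariant regular functions on $(\PP^1)^n$ to polynomial functions in the $a_j$ is \emph{not} a matter of averaging, and one cannot simply cite the characteristic-zero statement of \cite[th3.6.6]{MR2667486}. Instead I would argue integrally: regularity of degree $d$ in index $i$ means $Q$ is bihomogeneous of degree $d$ in each pair $(\mu_i,\nu_i)$, so $Q$ lies in the $k$-span of the monomials $\prod_i \mu_i^{u_i}\nu_i^{v_i}\cdot x^{w_1}z^{w_2}$ with $u_i+v_i=d$; this span is, as an $S_n$-module, the $d$-th graded piece of $k[\mu_i,\nu_i]\!\otimes\!k[x,z]$ for the symmetric-power action, and its $S_n$-invariants are spanned by orbit sums of monomials in the $[ij]$ and $[iu]$ — which is exactly the generating statement recalled before the theorem, valid in all characteristics by \cite{MR0422314} and \cite[Satz 5]{MR0374142}. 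Expressing such an orbit sum through the factorisation $f=\prod_i(\mu_i x-\nu_i z)$ shows it is a universal polynomial (with $\ZZ$-coefficients) in the $a_j,x,z$, independent of the base field; specialising that universal expression to $k$ yields the desired $C$ with $\Psi(C)=Q$. Equivariance of $\Psi$ then forces $C$ to be $\SL_2$-invariant, and tracking regularity degree and $u$-degree through this universal expression re-confirms (1) and (2). Once the integral descent is in place, the isomorphism and the numerical dictionary are formal.
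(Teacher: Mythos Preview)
Your proposal is correct and follows essentially the same route the paper indicates: the paper does not actually give a proof of Theorem~\ref{theo_covbrakets} but simply states that it is ``an elementary consequence of the previous theorem'' (i.e.\ of the identification of regular covariants of $n$ points with $\mathcal{B}_{reg}(n)$, recalled at \ref{theoBregCovReguliers}) and refers to \cite[th.~3.6.6]{MR2667486}. Your argument is precisely a fleshing-out of that reduction: you pull back along $\Psi$, check that degree~$d$ in the $a_j$ becomes regularity degree~$d$ and that order~$r$ becomes $u$-degree~$r$, and then invoke the first fundamental theorem in the characteristic-free form of \cite{MR0422314} and \cite[Satz~5]{MR0374142} together with $S_n$-descent.

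The one place where you go beyond the paper is worth noting. The paper, following Sturmfels, treats the passage from $\mathcal{B}_{reg}(n)^{S_n}$ back to polynomials in the $a_j$ as automatic, whereas you explicitly flag that in the modular case one cannot average and must instead argue integrally via orbit sums of monomials in the $\mu_i,\nu_i$. Your descent argument is sound: the monomial basis of the regular-degree-$d$ piece is permuted by $S_n$, so invariants are $k$-spanned by orbit sums regardless of characteristic, and each orbit sum is a universal $\ZZ$-polynomial in the $a_j,x,z$ because it descends along the quotient $(\PP^1)^n\to\mathrm{Sym}^n\PP^1\cong\PP^n$. This is exactly the content hidden behind the paper's phrase ``elementary consequence'', and it is helpful that you make it explicit; the paper itself only becomes careful about modular versus non-modular behaviour later, in the symmetrisation step of Section~1.3, where it is the surjectivity of $R_n\to\mathcal{B}_{reg}(n)^{S_n}$ (not of $\Psi$ itself) that can fail.
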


\subsection{Computation of ${\cal B}_{reg}(n)$}
\label{subsecBreg_n}
We wish to compute ${\cal C}_n$ as ${\cal B}_{reg}(n)^{{\cal S}_n}$. 
If $b_1,\dots,b_t$ is a generator system of bracket monomial for ${\cal B}_{reg}(n)$, then we have a surjective morphism:
\[
\begin{array}{cll}
k[x_1,\dots, x_t] & \to & {\cal B}_{reg}(n).\\
x_i & \to & b_i
\end{array}
\]
The kernel $I$ of this morphism is generated by the following relations.
\begin{prop}
\label{prop_sysigie}
Let $1\leq i<j<k<l\leq n$, we have:
$$[ik][jl] = [ij][kl] + [il][jk],$$
$$[ik][ju] = [ij][ku] + [iu][jk].$$
These relations are called the \emph{syzygies}.
\end{prop}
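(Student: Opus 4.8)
The plan is to recognise the brackets as $2\times 2$ determinants and to read both displayed identities as two specialisations of a single classical relation. Writing
$$[ij]=\det\begin{pmatrix}\mu_i&\mu_j\\\nu_i&\nu_j\end{pmatrix},\qquad [iu]=\det\begin{pmatrix}\mu_i&x\\\nu_i&z\end{pmatrix},$$
and setting $P_m=(\mu_m,\nu_m)^{\mathsf T}\in k^2$ for $1\le m\le n$ together with $P_u=(x,z)^{\mathsf T}$, every bracket is of the form $\det(P_a\mid P_b)$ with $P_a,P_b$ columns in $k^2$. I would then establish the ``master'' Grassmann--Plücker relation
$$\det(a\mid c)\,\det(b\mid d)=\det(a\mid b)\,\det(c\mid d)+\det(a\mid d)\,\det(b\mid c)\qquad(a,b,c,d\in k^2),$$
and observe that the first syzygy is its value at $(a,b,c,d)=(P_i,P_j,P_k,P_l)$ and the second its value at $(a,b,c,d)=(P_i,P_j,P_k,P_u)$. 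In particular the two statements are really one and the same, and the occurrence of the special ``point'' $u$ plays no distinguished role.

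For the master relation I see two equally short routes, and I would present whichever reads more cleanly. The brute-force one is to expand both sides: each side is a sum of four degree-$4$ monomials in the coordinates of $a,b,c,d$, and the equality is visible after matching terms. The slightly slicker one is to note that, with $c,d$ fixed, the expression $\det(a\mid c)\det(b\mid d)-\det(a\mid b)\det(c\mid d)-\det(a\mid d)\det(b\mid c)$ is bilinear and alternating in $(a,b)$; since every alternating bilinear form on $k^2$ is a scalar multiple of $\det$, it equals $\lambda(c,d)\,\det(a\mid b)$, and evaluating at $a=e_1$, $b=e_2$ gives $\lambda(c,d)=0$ after a two-line computation. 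Either way the argument is purely polynomial and uses no division whatsoever, hence holds over $k$ regardless of $\mathrm{char}\,k$ --- which is exactly the point of this section.

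If one also wants the assertion of the preceding paragraph, namely that these syzygies \emph{generate} the kernel $I$ of $k[x_1,\dots,x_t]\twoheadrightarrow{\cal B}_{reg}(n)$, I would add the following. One inclusion is what was just proved. For the reverse inclusion I would run the straightening algorithm: fix the lexicographic order on index pairs, call a bracket monomial \emph{standard} when it contains no crossing factor $[ik][jl]$ with $i<j<k<l$, and use the first relation (together with $[ij]=-[ji]$ and $[ii]=0$) to rewrite, modulo the syzygy ideal, any non-standard monomial as a $k$-linear combination of lexicographically smaller ones; a monomial-order induction shows this terminates, so modulo the syzygies every element of $k[x_1,\dots,x_t]$ becomes a $k$-combination of standard monomials. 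Then invoke the straightening law for the bracket ring of $\SL_2$ --- this is the first fundamental theorem recalled above, cf.\ \cite{MR0374142,MR2667486,MR0422314} --- to the effect that the standard monomials are $k$-linearly independent in ${\cal B}_{reg}(n)$; consequently any $f\in I$ reduces modulo the syzygies to a $k$-combination of standard monomials that maps to $0$, hence is $0$, so $f$ lies in the syzygy ideal.

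The displayed identities themselves are a mechanical verification with no real obstacle. The genuine difficulty, should the generation statement be folded into the proposition, is the linear independence of the standard monomials, i.e.\ the characteristic-free straightening law; that is precisely the input I would draw from the cited first fundamental theorem rather than reprove, since it is exactly the ingredient that is known to be insensitive to $\mathrm{char}\,k$ for $\SL_2$.
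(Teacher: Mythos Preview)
Your argument is correct. The paper does not actually supply a proof of this proposition: it simply states the two identities and moves on, treating them as classical facts. Your recognition of the brackets as $2\times 2$ minors and reduction of both identities to a single Grassmann--Pl\"ucker relation is the standard way to see this, and your observation that the argument is division-free and hence characteristic-independent is exactly on point for the paper's purposes. One cosmetic slip: with the paper's convention $[iu]=\mu_i x-\nu_i z$, the column $P_u$ should be $(z,x)^{\mathsf T}$ rather than $(x,z)^{\mathsf T}$; with your choice one gets $-[iu]$, but the sign cancels across the identity, so nothing is affected.

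Your additional paragraph on the generation of the kernel $I$ goes beyond what the proposition itself asserts, though it does address the sentence immediately preceding it. The paper again offers no argument there, simply asserting the fact; your straightening sketch, deferring the linear independence of standard monomials to the cited first fundamental theorem, is the right shape for a proof and correctly isolates where the real content lies.
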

\noindent
In order to have a clearer view of a generator system of ${\cal B}_{reg}(n)$ the monomials will be represented by weighted graphs such that the vertices form a regular polygon. 
We represent:
\begin{itemize}
\item a monomial of ${\cal B}(n)$ by a graph with $n$ vertices numbered from $1$ to $n$ and a vertex called $\textcircled{u}$,
\item the bracket $[ij]$ by an edge connecting the vertex $i$ to the vertex $j$ with $i<j\in\{1,\dots,n\}$,
\item the bracket $[iu]$ by an edge connecting the vertex $\textcircled{u}$ to the vertex $i$.
\end{itemize}
For example, the bracket product $[12][14]^3[34][1u][2u]^2\in{\cal B}(4)$
is represented by the following weighted graph:
\begin{center}
\begin{tikzpicture}
\draw (0.13,0) node [left] {$\textcircled{u}$};
\draw (1,1.5) node {$\bullet$} node [above]{$1$}; 
\draw (3,1) node {$\bullet$} node [above]{$2$};
\draw (3,-1) node {$\bullet$} node [below]{$3$};
\draw (1,-1.5) node {$\bullet$} node [below]{$4$};
\draw(3,1)--(1,1.5);                                                                 
\draw(1,-1.5)--(1,1.5) node [midway,right]{$3$};              
\draw(0,0)--(1,1.5);                                                                 
\draw(0,0)--(3,1) node [midway,above]{$2$};                   
\draw(3,-1)--(1,-1.5);                                                               
\end{tikzpicture}
\end{center}

\noindent
The previous allow us to formulate $5$ remarks which are very useful to construct a generator system for ${\cal B}_{reg}(n)$. 
The next point follows from the definition of ${\cal B}_{reg}(n)$.
\begin{pt}
\label{pt_reg}
Every monomial of order $m$ and of regularity degree $d$ is represented by a graph with $m$ connexions with $\textcircled{u}$ and every numbered vertex has a valence $d$. 
\end{pt}
\noindent
Moreover, by Kempe's lemma (\cite[th. 3.7.3 p. 132]{MR2667486}), the covariant algebra of $n$ points is generated by elements of regularity degree at most $2$. 
Also, by \cite[th.2.3 p.7]{MR2477759}, the invariants of $n$ points are generated by the regularity degree $1$, hence the following point:
\begin{pt}
\label{pt_bornes_degre}
The numbered vertices have a valence at most $2$. 
The graph corresponding to invariants has a valence $1$.
\end{pt}
\noindent
By its definition, if a graph is expressed as a union of subgraphs corresponding to graphs of smaller degree and smaller order already computed, the associated covariant is reducible.
\begin{pt}
\label{pt_sous_graphe}
The graphs having a subgraph already calculated are excluded.
\end{pt}
\noindent
When the vertices are in a regular polygon, proposition\fullref{prop_sysigie} causes the following point (see \cite[th.6.2 p.72]{MR722856} for a proof):
\begin{pt}
\label{pt_syzygies}
The graphs of our generator system have no edges crossing.
\end{pt}
\noindent
Thanks to point \ref{pt_bornes_degre},  the number of adjacent edges of $\textcircled{u}$ is bounded. 
Moreover, when $n$ is even, point \ref{pt_reg} imposes another condition on the edges adjacent to $\textcircled{u}$.
\begin{pt}
\label{pt_bormnes_ordre}
$\textcircled{u}$ has at most $2n$ adjacent edges.\\
When $n$ is even, $\textcircled{u}$ has an even number of adjacent edges.
\end{pt}
\begin{ex}
\label{exBreg_quartic}
Using the five points above and considering increasing orders we get the following generators, when $n=4$, called $t_0$, $t_1$, $u_0$, $u_1$, $u_2$ and $f$.
\end{ex}

\begin{center}
\begin{tikzpicture}
\draw [loosely dashed] (-1,-1.5) -- (-1,1.5);
\draw (-0.5,-1.5) node [left] {$t_0$};
\draw (0.17,0) node [left] {$\textcircled{u}$};                    
\draw (1,1.5) node {$\bullet$} node [above]{$1$}; 
\draw (3,1) node {$\bullet$} node [above]{$2$};
\draw (3,-1) node {$\bullet$} node [below]{$3$};
\draw (1,-1.5) node {$\bullet$} node [below]{$4$};
\draw(3,1)--(1,1.5);                                                             
\draw(3,-1)--(1,-1.5);                                                          
\end{tikzpicture}\:\:\:\:\:\:\:\:\:\:\:\:\:\:\:\:\:\:\:\:\:\:\:\:
\begin{tikzpicture}
\draw [loosely dashed] (-1,-1.5) -- (-1,1.5);
\draw (-0.5,-1.5) node [left] {$t_1$};
\draw (0.17,0) node [left] {$\textcircled{u}$};                    
\draw (1,1.5) node {$\bullet$} node [above]{$1$}; 
\draw (3,1) node {$\bullet$} node [above]{$2$};
\draw (3,-1) node {$\bullet$} node [below]{$3$};
\draw (1,-1.5) node {$\bullet$} node [below]{$4$};
\draw(1,-1.5)--(1,1.5);                                                      
\draw(3,1)--(3,-1);                                                            
\end{tikzpicture}
\end{center}
\begin{center}
\begin{tikzpicture}
\draw [loosely dashed] (-1,-1.5) -- (-1,1.5);
\draw (-0.5,-1.5) node [left] {$u_0$};
\draw (0.17,0) node [left] {$\textcircled{u}$};                    
\draw (1,1.5) node {$\bullet$} node [above]{$1$}; 
\draw (3,1) node {$\bullet$} node [above]{$2$};
\draw (3,-1) node {$\bullet$} node [below]{$3$};
\draw (1,-1.5) node {$\bullet$} node [below]{$4$};
\draw(0,0)--(1,1.5);                                                                
\draw(0,0)--(3,1);                                                                   
\draw(3,-1)--(1,-1.5);                                                             
\end{tikzpicture}\:\:\:\:\:\:\:\:\:\:\:\:\:\:\:\:\:\:\:\:\:\:\:\:
\begin{tikzpicture}
\draw [loosely dashed] (-1,-1.5) -- (-1,1.5);
\draw (-0.5,-1.5) node [left] {$u_1$};
\draw (0.17,0) node [left] {$\textcircled{u}$};                    
\draw (1,1.5) node {$\bullet$} node [above]{$1$}; 
\draw (3,1) node {$\bullet$} node [above]{$2$};
\draw (3,-1) node {$\bullet$} node [below]{$3$};
\draw (1,-1.5) node {$\bullet$} node [below]{$4$};
\draw(0,0)--(1,1.5);                                                                
\draw(3,1)--(3,-1);                                                                  
\draw(0,0)--(1,-1.5);                                                               
\end{tikzpicture}\:\:\:\:\:\:\:\:\:\:\:\:\:\:\:\:\:\:\:\:\:\:\:\:
\begin{tikzpicture}
\draw [loosely dashed] (-1,-1.5) -- (-1,1.5);
\draw (-0.5,-1.5) node [left] {$u_2$};
\draw (0.17,0) node [left] {$\textcircled{u}$};                    
\draw (1,1.5) node {$\bullet$} node [above]{$1$}; 
\draw (3,1) node {$\bullet$} node [above]{$2$};
\draw (3,-1) node {$\bullet$} node [below]{$3$};
\draw (1,-1.5) node {$\bullet$} node [below]{$4$};
\draw(3,1)--(1,1.5);                                                                
\draw(0,0)--(3,-1);                                                                  
\draw(0,0)--(1,-1.5);                                                               
\end{tikzpicture}
\end{center}
\begin{center}
\begin{tikzpicture}
\draw [loosely dashed] (-1,-1.5) -- (-1,1.5);
\draw (-0.5,-1.5) node [left] {$f$};
\draw (0.17,0) node [left] {$\textcircled{u}$};
\draw (1,1.5) node {$\bullet$} node [above]{$1$}; 
\draw (3,1) node {$\bullet$} node [above]{$2$};
\draw (3,-1) node {$\bullet$} node [below]{$3$};
\draw (1,-1.5) node {$\bullet$} node [below]{$4$};
\draw(0,0)--(1,1.5);                                                                
\draw(0,0)--(3,1);                                                                   
\draw(0,0)--(3,-1);                                                                  
\draw(0,0)--(1,-1.5);                                                               
\end{tikzpicture}
\end{center}

\subsection{Symmetrization}
The action of ${\cal S}_n$ on the $b_i$ induces a representation $G_n$ of ${\cal S}_n$ in $\GL_t(k)$. 
There are algorithms for computing $k[x_1,\dots,x_t]^{G_n} = R_n$ (cf \cite{DerKem}). 
The latter being also valid in the modular case (i.e when $p\:\big|\:|G_n|$), we use them `naively' through their {\tt Magma} implementations. 
However this process generates a limitation when $n=6$.
What remains to clarify is the link between $R_n$ and ${\cal C}_n$.\\

\noindent
When $p$ does not divide $|{\cal S}_n|$, ${\cal S}_n$ is a linearly reductive group (cf \cite[Def 2.2.1]{DerKem}) and the existence of Reynolds operators (cf \cite[Th 2.2.5]{DerKem}) preserves the surjectivity of the morphism $k[x_1,\dots,x_n]\to{\cal B}_{reg}(n)$ in the symmetrization process. 
Thanks to \cite[lem.3.7.2]{MR2667486}, the image of a generator system of $R_n$ by the canonical surjection $k[x_1,\dots,x_n]\to k[x_1,\dots,x_n]/I = {\cal B}_{reg}(n)$ is a generator system of ${\cal C}_n = {\cal B}_{reg}(n)^{{\cal S}_n}$. 
In particular, if $p>n$, we get a generalization of the result of Geyer:
the covariant ring ${\cal C}_n$ is the reduction modulo $p$ of the covariant ring in characteristic $0$. 
In particular, the bi-graduate Poincaré series are identical.\\

\noindent
When $p\:\big|\:|{\cal S}_n|$, ${\cal S}_n$ is only a reductive group (cf. \cite[sec 2.2.2]{DerKem}) and the previous result is no longer valid in the general case. 
To overcome this, we recall the following concept:
\begin{de}
\label{defInvSep}
Let $X$ be an affine variety and $G$ an automorphism group of $k[X]$. 
A subset $S\subseteq k[X]^G$ is called \emph{separating}\index{separating} if, for every couple of points $(x,y)$ of $X$, we have the following propriety:
if there exists an element $f\in k[X]^G$ such that $f(x)\neq f(y)$, there exists an element $g$ in $S$ such that $g(x)\neq g(y)$. 
\end{de}
\noindent
The relation with the invariant ring is the following (cf \cite[prop.2.3.10]{DerKem}):
\begin{prop}
Suppose that $X$ is irreducible and $k[X]^G$ is finitely generated. 
Let $A\subseteq k[X]^G$ be a separating sub-algebra finitely generated. 
$\Frac(k[X]^G)$ is then a purely inseparable finite extension of $\Frac(A)$. 
In particular, if the characteristic of $k$ is zero then:
$$\Frac(A) = \Frac(k[X]^G).$$
\end{prop}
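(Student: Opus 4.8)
\noindent
\emph{Proof strategy.}
The plan is to translate the statement into geometry. Write $C:=k[X]$ and $B:=k[X]^G$. Since $X$ is irreducible, $C$ is an integral domain, hence so are its subrings $A\subseteq B\subseteq C$; being finitely generated $k$-algebras and, as domains, reduced, $A$ and $B$ are the coordinate rings of irreducible affine $k$-varieties, say $W$ and $Z$, and the inclusions $A\hookrightarrow B\hookrightarrow C$ induce morphisms
\[
X \xrightarrow{\ \pi_B\ } Z \xrightarrow{\ \phi\ } W,\qquad \pi_A:=\phi\circ\pi_B ,
\]
all of which are dominant because the corresponding ring homomorphisms are injective. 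Passing to function fields yields the tower $\Frac(A)\subseteq\Frac(B)\subseteq\Frac(C)$, and what has to be shown is that $\Frac(B)/\Frac(A)$ is finite and purely inseparable.

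\noindent
Next I would unwind the separating hypothesis. By Definition \ref{defInvSep} with $S=A$ (and since every element of $A$ or $B$ is a regular function on $X$), saying that $A$ is a separating sub-algebra of $k[X]^G$ means exactly: for $x,y\in X$, if $a(x)=a(y)$ for all $a\in A$ then $f(x)=f(y)$ for all $f\in B$; equivalently $\pi_A(x)=\pi_A(y)\Rightarrow\pi_B(x)=\pi_B(y)$. From this I deduce that $\phi$ is injective on the image of $\pi_B$: if $z_i=\pi_B(x_i)$ and $\phi(z_1)=\phi(z_2)$, then $\pi_A(x_1)=\pi_A(x_2)$, hence $\pi_B(x_1)=\pi_B(x_2)$, i.e.\ $z_1=z_2$. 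Since $\pi_B$ is dominant, its image is a constructible dense subset of the irreducible variety $Z$ (Chevalley), so it contains a dense open subset $U\subseteq Z$, and $\phi|_U$ is injective.

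\noindent
It then remains to pass from this geometric injectivity to the field-theoretic conclusion. The restriction $\phi|_U\colon U\to W$ is dominant (as $U$ is dense in $Z$) and injective, hence has finite fibres, so by the theorem on fibre dimensions $\dim Z=\dim U=\dim W$; therefore $\Frac(B)/\Frac(A)$ is algebraic, and as $B$ is a finitely generated $k$-algebra it is in fact a finite extension. Since now $\dim(Z\setminus U)<\dim Z=\dim W$, the open set $V_0:=W\setminus\overline{\phi(Z\setminus U)}$ is dense, and every fibre of $\phi$ over a point of $V_0$ lies inside $U$, hence has at most one point. On the other hand, for the dominant generically finite morphism $\phi$ over the algebraically closed field $k$ there is a dense open $V_1\subseteq W$ over which the fibre cardinality equals the separable degree $[\Frac(B):\Frac(A)]_{\mathrm{sep}}$. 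For $w$ in the intersection $V_0\cap V_1$ (nonempty, as $W$ is irreducible), the fibre $\phi^{-1}(w)$ has both exactly $[\Frac(B):\Frac(A)]_{\mathrm{sep}}\ (\geq 1)$ points and at most one point, so $[\Frac(B):\Frac(A)]_{\mathrm{sep}}=1$: the extension $\Frac(B)/\Frac(A)$ is purely inseparable. Finally, if $\operatorname{char}k=0$ a finite purely inseparable extension is trivial, whence $\Frac(A)=\Frac(B)=\Frac(k[X]^G)$.

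\noindent
The step I expect to be the crux — and the one I would either cite to \cite{DerKem} or spell out in detail — is the last one, which rests on two standard inputs: that the image of a dominant morphism of varieties contains a dense open set, and that for a generically finite dominant morphism over an algebraically closed field the general fibre has exactly $[\Frac(B):\Frac(A)]_{\mathrm{sep}}$ points. Everything else is routine manipulation of integral domains, their fraction fields, and transcendence degrees.
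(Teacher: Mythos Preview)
The paper does not give its own proof of this proposition: it is quoted verbatim from \cite[prop.~2.3.10]{DerKem} and simply cited, so there is nothing in the paper to compare against. Your argument is therefore being judged on its own merits, and it is essentially correct; in fact it follows the same geometric route as the proof in Derksen--Kemper (pass to the affine varieties $W=\operatorname{Spec}A$ and $Z=\operatorname{Spec}B$, read the separating hypothesis as generic injectivity of $\phi:Z\to W$, and conclude that the separable degree of the function-field extension is $1$ via the generic fibre count).

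Two small points worth tightening if you were to write this out formally. First, in the step ``$\phi|_U$ is injective, hence has finite fibres, so $\dim Z=\dim W$'', you are tacitly using that an injective morphism of varieties over an algebraically closed field cannot raise dimension; this is true (e.g.\ via Ax--Grothendieck style arguments or simply by the fibre-dimension theorem applied after noting the generic fibre is a single reduced point), but it deserves a word. Second, your $V_0=W\setminus\overline{\phi(Z\setminus U)}$ is nonempty because $\dim\overline{\phi(Z\setminus U)}\le\dim(Z\setminus U)<\dim Z=\dim W$ and $W$ is irreducible; you state this but the logical order (first establish $\dim Z=\dim W$, only then conclude $V_0\neq\emptyset$) should be made explicit. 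With these clarifications the proof is complete, and the final ingredient you flag --- that the general fibre of a generically finite dominant morphism over an algebraically closed field has exactly $[\Frac(B):\Frac(A)]_{\mathrm{sep}}$ closed points --- is indeed the standard fact one cites here.
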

\noindent
Definition \ref{defInvSep} has the advantage to preserve the surjectivity on transition to invariants. 
Let $G$ be a linear algebraic group. 
Thanks to \cite[p. 59]{DerKem}, if $G$ is reductive, $G$ regularly acts on an affine variety $X$ and $Y\subseteq X$ is a sub-variety $G$-stable then the restriction map $k[X]\to k[Y]$ sends a separating subset of $k[X]^G$ to a separating subsets of $k[Y]^G$. 
So $R_n$ is the separating algebra of covariants of binary forms of degree $n$ but we do not necessarily have an equality between $R_n$ and ${\cal C}_n$. 
We will see, in the following case of quartics in characteristic $3$, when the inclusion is strict.
\begin{ex}
\label{exCov4}
In example \ref{exBreg_quartic}, we have seen that the covariant algebra of $4$ points is generated by $t_0$, $t_1$, $u_0$, $u_1$, $u_2$ et $f$. 
We will make the group ${\cal S}_4$ act  and, using the function {\tt InvariantRing} of {\tt Magma}, we will compute a separating system of the covariant algebra ${\cal C}_4$. 
Knowing that ${\cal S}_4$ is generated by $\sigma = (1234)$ and $\tau =(12)$, the action of ${\cal S}_4$ on $t_0$, $t_1$, $u_0$, $u_1$, $u_2$ and $f$ is given by the following equalities:
\[
\begin{array}{lll}
t_0^{\tau} = -t_0           & et & t_0^{\sigma} = -t_1,\\
t_1^{\tau} = t_1+ t_0   & et & t_1^{\sigma} = -t_0,\\
u_0^{\tau} = u_0         & et & u_0^{\sigma} = -(u_0+u_1+u_2),\\
u_1^{\tau} =u_1+u_2 & et & u_1^{\sigma} = u_0,\\
u_2^{\tau} = - u_2       & et & u_2^{\sigma} = u_1,\\
f^{\tau} = f                     & et &f^{\sigma} = f.\\
\end{array}
\]
Using the {\tt Magma} code of Appendix \ref{AppendixSymetrisation}, we get the following covariants:
\begin{align*}
   c_{0,2} = & -3a_1a_3 + a_2^2 + 12a_4a_0,\\
   c_{0,3} = &\:\: -27/2a_1^2a_4 + 9/2a_1a_2a_3 - a_2^3 + 36a_2a_4a_0 - 27/2a_3^2a_0,\\
   c_{4,1} = &\:\: a_0z^4 + a_1xz^3  + a_2x^2z^2 + a_3x^3z + a_4x^4,\\
   c_{4,2} = &\:\: (a_1^2 - 8/3a_2a_0)z^4 + (4/3a_1a_2 - 8a_3a_0)xz^3 + (4/3a_2^2 - 2a_1a_3 - 16a_4a_0)x^2z^2\\
                   & +( 4/3a_2a_3 - 8a_1a_4)x^3z + (a_3^2 - 8/3a_2a_4)x^4,\\
   c_{6,3} = &\:\: (a_1^3 - 4a_1a_0a_2+ 8a_0a_3)z^6  + (2a_1^2a_2 + 4a_0a_1a_3- 8a_0a_2^2 + 32a_0^2a_4)xz^5 + \\
                     & (5a_1^2a_3 + 40a_0a_1a_4 - 20a_0a_2a_3)x^2z^4 + (20a_1^2a_4 - 20a_0a_3^2)x^3z^3 +\\
                     & (20a_1a_2a_4- 5a_1a_3^2 - 40a_0a_3a_4)x^4z^2 +(8a_2^2a_4 - 4a_1a_3a_4  - 2a_2a_3^2  - 32a_0a_4^2)x^5z + \\
                     & (4a_2a_3a_4 - 8a_1a_4^2 - a_3^3)x^6.
\end{align*}
We recover the classic covariants of characteristic zero.

We apply the same process in characteristic $3$ (we change {\tt FF:= Rationals();} in the {\tt Magma} code by {\tt FF:= GF(3);}) and we get
\begin{align*}
   c_{0,1} = & a_2,\\
   c_{0,6} = & a_0^3a_4^3 + a_0^2a_2^2a_4^2 + a_0a_1a_2^2a_3a_4 + a_0a_2^4a_4 + 2a_0a_2^3a_3^2 + 2a_1^3a_3^3 + 2a_1^2a_2^3a_4 + a_1^2a_2^2a_3^2,\\
   c_{4,1} = &\:\: a_0z^4 + a_1xz^3  + a_2x^2z^2 + a_3x^3z + a_4x^4,\\
   c_{4,4} = & a_2c_{4,3},\\
   c_{6,3} = & (2a_0^2a_3 + 2a_0a_1a_2 + a_1^3) + (2a_0^2a_4 + a_0a_1a_3 + a_0a_2^2 + 2a_1^2a_2)x + (a_0a_1a_4 + a_0a_2a_3 + 2a_1^2a_3)x^2 + \\
                    & (a_0a_3^2 + 2a_1^2a_4)x^3 + (2a_0a_3a_4 + 2a_1a_2a_4 + a_1a_3^2)x^4 + (a_0a_4^2 + 2a_1a_3a_4 + 2a_2^2a_4 + a_2a_3^2)x^5 +\\
                    & (a_1a_4^2 + a_2a_3a_4 + 2a_3^3)x^6,\\
   c_{8,4} = & c_{4,1}(c_{4,3}-a_2^2c_{4,1}),\\
   c_{8,6} = & (c_{4,3}-a_2^2c_{4,1})c_{4,3}.
\end{align*}
Where
\begin{align*}
c_{4,3} = & (a_0a_4^2 + 2a_1a_3a_4 + 2a_2^2a_4 + a_2a_3^2)x^4 + (a_0a_3a_4 + a_1a_2a_4 + 2a_1a_3^2)x^3z + \\
                 & (a_0a_1a_4 + a_0a_2a_3 + 2a_1^2a_3)xz^3 + (a_0^2a_4 + 2a_0a_1a_3 + 2a_0a_2^2 + a_1^2a_2)z^4.
\end{align*}
$c_{4,3}$ is a covariant of degree $3$ and order $4$ in characteristic $3$. 
This system is not generator of the algebra of covariants because we cannot find $c_{4,3}$ as polynomial in the $c_{2i,j}$. 
This case provides an example where a sub-set of separating is not a generator system of the covariant algebra. 
We point out that $\{c_{0,1},c_{0,6},c_{4,1},c_{4,3},c_{6,3}\}$ is a separating system of  ${\cal C}_4$ and one wonders if it is also a generator system. 
The theorem \cite[Th.2.3.12]{DerKem} would in theory lead to an algorithm to test this hypothesis but in practice the computation did not finish.
\end{ex}

\section{A new way to generate covariants in small characteristic}
Here we introduce a new way to build covariants in small characteristic. 
To show the validity of our approach, our first idea was to use the differential characterization of covariants, as in \cite[p.43]{MR1266168}. 
It turns out however that the result of Hilbert (theorem \fullref{theoCovTroisCond}), originally shown in characteristic $0$, admits counterexamples in small characteristic, as discussed in section \ref{sec:cond}. 
So we approach the proof of the theorem \fullref{propPetitCov} directly. 
First we recall the result of Hilbert and then we give our proof. 
In the following, $f$ is a binary form defined over the field $k$ :
$$f = \sum_{i=0}^na_ix^iz^{n-i}.$$

\subsection{Hilbert's differential characterization of covariants}\label{sec:cond}
Let :
\begin{itemize}
\item $k[a_0,\dots,a_n]_d$ be the homogeneous polynomial algebra of degree $d$, 
\item $\mathbb{T}$ be the sub-group of diagonal matrices of $\SL_2(k)$,
\item $\Gamma$ be the sub-group of upper triangular matrices and diagonal equal to $1$ of $\SL_2(k)$,
\item $\Gamma^*$ be the sub-group of lower triangular matrices and diagonal equal to $1$ of $\SL_2(k)$.
\end{itemize}
These three subgroups are important because they generate $\SL_2(k)$ and thus permit to break down the issues of invariance under the action of these groups. 
Let  $M=a_0^{\rho_0}a_1^{\rho_1}\dots,a_ n^{\rho_n}$ be a monomial of $ k[a_0,\dots,a_n]$. 
We define the \textbf{weight} of $M$ by 
$w = \sum_{i=0}^ni\rho_i$.
We say that a non zero element $I$ of $ k[a_0,\dots,a_n]$ is \emph{isobaric}\index{isobaric} if all of its monomials have the same weight.
We define two differential operators on $I$ that preserve the degree.
The operators $\boldsymbol{\Delta}$ and $\textbf{D}$ are given by:
$$\boldsymbol{\Delta} = \sum_{i=1}^{n}ia_i\frac{\partial}{\partial a_{i-1}}$$
$$\textbf{D} = \sum_{i=0}^{n-1}(n-i)a_{i}\frac{\partial}{\partial a_{i+1}}.$$
\begin{theo}
Suppose $p=0$ or $p>nd +m$. 
The polynomial $C = \sum_{i=0}^mC_ix^iz^{m-i}$
is a covariant of the binary form  $f$ under the action of $\SL_2(k)$ if and only if the following conditions are satisfied:
\begin{enumerate}
\item\:$C_0,\dots,C_m$ are homogeneous functions of degree $d$ and isobaric of weight $w,w+1,\dots, w+m$ with $nd-2w = m$,
\item\:$\textbf{D} C = x\frac{\partial C}{\partial z}$,
\item\:$\boldsymbol{\Delta} C = z\frac{\partial C}{\partial x}.$
\end{enumerate}
\label{theoCovTroisCond}
\end{theo}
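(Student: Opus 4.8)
The plan is to reduce the invariance of $C$ under $\SL_2(k)$ to invariance under the three generating subgroups $\mathbb{T}$, $\Gamma$ and $\Gamma^*$, and then translate each of these into one of the three stated conditions. First I would treat the torus $\mathbb{T}$: writing a diagonal matrix as $\mathrm{diag}(\lambda,\lambda^{-1})$, one computes how the $a_i$ and the pair $(x,z)$ transform (from the action defined via $M^{-1}$ on the roots, hence on the coefficients), and invariance up to $\det(M)^w = 1$ forces each $C_i$ to be isobaric of a prescribed weight and homogeneous of a fixed degree $d$; matching the powers of $\lambda$ coming from the $x^iz^{m-i}$ factors gives the arithmetic relation $nd-2w=m$. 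This is condition (1), and it is essentially a bookkeeping computation with no characteristic restriction needed yet.

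Next I would handle $\Gamma$ and $\Gamma^*$. The key observation is that a one-parameter unipotent subgroup $u(t)$ acts on $k[a_0,\dots,a_n,x,z]$ and its \emph{infinitesimal generator} is exactly one of the operators $\mathbf{D}$ or $\boldsymbol{\Delta}$ (up to the correction terms $x\,\partial/\partial z$ and $z\,\partial/\partial x$ coming from the action on the variables $x,z$). Concretely, one shows that $C(u(t)\cdot(a,x,z)) = \exp\bigl(t(\mathbf{D} - x\partial_z)\bigr)C$ as a formal identity in $t$, so that $C$ is $\Gamma$-invariant if and only if $(\mathbf{D} - x\partial_z)C = 0$, i.e. condition (2); symmetrically $\Gamma^*$ gives condition (3). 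For the forward direction (covariant $\Rightarrow$ conditions) this is immediate by differentiating the invariance identity at $t=0$. For the converse one integrates: if $(\mathbf{D}-x\partial_z)C=0$ then all higher ``time'' derivatives of $C\circ u(t)$ vanish, so the Taylor expansion in $t$ is constant — and here is where the hypothesis $p=0$ or $p > nd+m$ enters, since one needs the Taylor/exponential series to faithfully recover the polynomial, i.e. the relevant factorials and the degrees appearing (bounded by $nd+m$, the total degree in all variables of the expressions one differentiates) must be invertible. Finally, invariance under $\mathbb{T}$, $\Gamma$, $\Gamma^*$ together yields invariance under the group they generate, namely $\SL_2(k)$, completing the proof.

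The main obstacle I anticipate is precisely the converse direction in small-ish characteristic: showing that the vanishing of the first-order operator forces genuine $\Gamma$-invariance rather than invariance merely ``modulo $p$-th powers of $t$''. In characteristic zero this is the standard Lie-theoretic fact that a connected group is generated by its one-parameter subgroups and the representation is determined by its differential; in characteristic $p$ one must argue that no Frobenius-type obstruction appears, and the bound $p>nd+m$ is exactly what kills such obstructions — indeed the excerpt itself flags that the statement \emph{fails} without a bound (section \ref{sec:cond}), so the delicate point of the proof is to locate where the degree bound $nd+m$ is the sharp quantity controlling the exponentials. A secondary, more routine obstacle is carefully checking that the operators $\mathbf{D}$ and $\boldsymbol{\Delta}$ as written (with the coefficients $n-i$ and $i$) really are the correct infinitesimal generators for the normalization of the $\SL_2$-action fixed by the definition of covariants of $n$ points via $M^{-1}$; this is a sign-and-index verification that I would do once at the start to fix conventions.
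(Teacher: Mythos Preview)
The paper does not prove this theorem: it is stated with a citation to Hilbert (\cite[p.~43]{MR1266168}) and immediately followed by a counterexample in characteristic~$3$ showing the statement fails without the bound on $p$. There is therefore no proof in the paper to compare yours against.

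That said, your sketch is the classical argument and is sound. The reduction to the three subgroups $\mathbb{T}$, $\Gamma$, $\Gamma^*$ is exactly right; the paper itself uses this decomposition, and proves the torus step as Lemma~\ref{lemCovCondTorique}, when establishing its own Theorem~\ref{propPetitCov}. Your diagnosis of where the hypothesis $p>nd+m$ enters is also correct. Writing $P(t)=C\bigl(u(t)\cdot(a,x,z)\bigr)$ for a one-parameter unipotent $u(t)$, this is a polynomial in $t$ of degree at most $nd+m$ (each transformed coefficient $a_i'$ is a polynomial in $t$ of degree $\le n$, and $C$ has degree $d$ in the $a_i$ and $m$ in $(x,z)$). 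The chain rule gives $P'(t)=(LC)\bigl(u(t)\cdot -\bigr)$ with $L$ the corresponding first-order operator, so if $LC=0$ then $P'(t)=0$; a polynomial in $t$ of degree $<p$ with vanishing derivative is constant, which is exactly $\Gamma$- (resp.\ $\Gamma^*$-) invariance. One refinement: in positive characteristic do not literally write $\exp(tL)$, since the factorials in that series are precisely what fail; the $\tfrac{d}{dt}$-plus-degree-bound phrasing above sidesteps this and makes the role of $nd+m$ transparent.
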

This result is not available in every characteristic.  
Let $f=\sum_{i=0}^{16}a_ix^iz^{16-i}$ be a binary form of degree $n = 16$ in characteristic $3$. 
Note $C= a_{11}x^6$ a homogeneous polynomial of degree $m= 6$. 
The polynomial $C$ is not a covariant of  $f$ because for $M=
\left(
\begin{array}{ll}
1 & 1\\
0 & 1
\end{array}
\right)\in\SL_2(k)$ we have
$$ C(M.f,M.(x,z))= (x + 2z)^6 (a_{11} + a_{14})\neq C.$$
Nonetheless
\begin{enumerate}
\item\:$C_0,\dots,C_m$ are homogeneous functions of degree $d = 1$ and isobaric of weight $w=5,6,\dots, 11$ with $nd-2w = 16\cdot1-2\cdot5 = 6 = m$,
\item $\textbf{D} C = 6a_{10}x^6 = 0 = x\frac{\partial C}{\partial z}$,
\item $\boldsymbol{\Delta} C = 12a_{6}x^6 = 0 = z\frac{\partial C}{\partial x}$.
\end{enumerate}

Hence, Hilbert theorem cannot be used to prove our next result. 
However, we will revise some elements of the proof using the three sub-groups $\Gamma$, $\Gamma^*$ and $\mathbb{T}$.

\subsection{A new way to build covariants in positive characteristic }

Before showing our theorem, we set some notations. 
Let $M \in \SL_2(k)$. 
We have
$$f(M.(x,z)) =\sum_{i=0}^na'_ix^iz^{n-i}.$$
In the following, we note $X=(x,z)$, $X'=M^{-1}(x,z)$, $a = (a_0,\dots,a_n)$ and $a' = (a'_0,\dots,a'_n)$. 
We start with a lemma.
\begin{lem}
An homogenous polynomial $C\in k[a_0,\dots,a_n]_d[x,z]$ is a covariant under the action of $\mathbb{T}$ if and only if
the $C_i$ are isobaric of weight $w+i$ and $nd-2w = m$.
 \label{lemCovCondTorique}
\end{lem}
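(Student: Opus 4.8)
The plan is to compute directly how a diagonal element of $\SL_2(k)$ acts on the coefficients $a_i$ of $f$, transfer this to an action on the coefficients $C_i$ of $C$, and match the resulting conditions against isobaricity. Write a generic element of $\mathbb{T}$ as $t = \mathrm{diag}(\lambda,\lambda^{-1})$ with $\lambda\in k^\times$. First I would record the action on the variables: $t$ sends $(x,z)$ to $(\lambda^{-1}x,\lambda z)$ (according to the $M^{-1}$-convention fixed just before the lemma), and hence substituting into $f=\sum_i a_i x^i z^{n-i}$ shows that the coefficient $a_i$ of the transformed form is $\lambda^{-(i)}\lambda^{n-i}a_i=\lambda^{n-2i}a_i$. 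More precisely, since the monomial $a_0^{\rho_0}\cdots a_n^{\rho_n}$ picks up the factor $\lambda^{\sum_i(n-2i)\rho_i}=\lambda^{nd-2w}$ where $d=\sum\rho_i$ is the degree and $w=\sum i\rho_i$ is the weight, a homogeneous degree-$d$ element of $k[a_0,\dots,a_n]$ is $\mathbb{T}$-semi-invariant with character $\lambda\mapsto\lambda^{nd-2w}$ precisely when it is isobaric of weight $w$. This is the one genuinely substantive computation, and it is short.

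Next I would assemble the covariant condition. By definition, $C=\sum_{i=0}^m C_i x^i z^{m-i}$ (with $C_i\in k[a_0,\dots,a_n]_d$) is a covariant under $\mathbb{T}$ iff there is an integer exponent $e$ with $C(t\cdot a, t\cdot(x,z))=\det(t)^e\,C(a,x,z)=C(a,x,z)$, since $\det(t)=1$. Applying the variable action to $x^i z^{m-i}$ gives the factor $\lambda^{-(i)+(m-i)}=\lambda^{m-2i}$, while applying the coefficient action to $C_i$ — assuming $C_i$ is isobaric of some weight $w_i$ — multiplies it by $\lambda^{nd-2w_i}$. So the $x^i z^{m-i}$-component of $C(t\cdot a,t\cdot(x,z))$ equals $\lambda^{nd-2w_i+m-2i}\,C_i\,x^i z^{m-i}$. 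Equating with $C$ monomial-by-monomial (the monomials $x^iz^{m-i}$ are linearly independent) forces, for each $i$ with $C_i\neq 0$, the relation $nd-2w_i+m-2i=0$.

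From here the equivalence drops out. For the ``if'' direction: if each $C_i$ is isobaric of weight $w+i$ and $nd-2w=m$, then $nd-2w_i+m-2i=nd-2(w+i)+m-2i=(nd-2w-m)+2m-4i+\dots$ — I would simply substitute $w_i=w+i$ and $m=nd-2w$ and check the exponent vanishes, so every component is fixed and $C$ is $\mathbb{T}$-invariant. For the ``only if'' direction: $\mathbb{T}$-invariance means that for every $i$ the $x^iz^{m-i}$-component of $C(t\cdot a,t\cdot(x,z))$ must be $\lambda$-independent; writing $C_i=\sum_\mu c_\mu$ as a sum of monomials of weights $\mu$, the component is $\sum_\mu \lambda^{nd-2\mu+m-2i}c_\mu$, and $\lambda$-independence (for all $\lambda\in k^\times$, $k$ infinite) forces all occurring $\mu$ to be equal — so $C_i$ is isobaric, of some weight $w_i$ with $nd-2w_i+m-2i=0$. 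Fixing $w:=w_0$ gives $nd-2w+m=0$, i.e. $nd-2w=m$, and then $w_i=w+i$ for all $i$.

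The main obstacle is essentially bookkeeping: getting the exponents right under the paper's $M^{-1}$-convention (it is easy to flip a sign and land on $z\,\partial/\partial x$ versus $x\,\partial/\partial z$), and making sure the argument that ``$\lambda\mapsto\sum_\mu c_\mu\lambda^{e_\mu}$ constant $\Rightarrow$ all $e_\mu$ equal'' is legitimate — which it is because $k$ is algebraically closed, hence infinite, so a nonzero Laurent polynomial in $\lambda$ cannot vanish identically. No characteristic hypothesis is needed here, which is exactly why this lemma, unlike Hilbert's theorem \ref{theoCovTroisCond}, survives in small characteristic and can serve as the torus-invariance input to the proof of \ref{propPetitCov}.
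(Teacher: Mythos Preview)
Your approach is exactly the paper's: parametrize $\mathbb{T}$ by a single unit $\lambda$, compute how $\lambda$ scales each $a_j$ and each $x^iz^{m-i}$, and read off the isobaricity condition from the vanishing of the combined exponent. The structure and the key observation (that a Laurent polynomial in $\lambda$ over an infinite field vanishes identically only if each coefficient does) are both correct.

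However, there is a concrete sign error in how you couple the two actions --- precisely the ``bookkeeping'' pitfall you flagged. With $t=\mathrm{diag}(\lambda,\lambda^{-1})$ and the $M^{-1}$-convention on $(x,z)$, you correctly get $(x,z)\mapsto(\lambda^{-1}x,\lambda z)$, hence $x^iz^{m-i}\mapsto\lambda^{m-2i}x^iz^{m-i}$. But the induced action on the coefficients must go the \emph{opposite} way: the new coefficients $a'_j$ are those of $f(t\cdot(x,z))=f(\lambda x,\lambda^{-1}z)$, giving $a'_j=\lambda^{2j-n}a_j$, not $\lambda^{n-2j}a_j$. (You obtained $\lambda^{n-2j}$ by substituting $(\lambda^{-1}x,\lambda z)$ into $f$, which is the inverse substitution.) With your signs the combined exponent is $nd-2w_i+m-2i$, and plugging in $w_i=w+i$, $m=nd-2w$ yields $2m-4i$, not $0$ --- your own ``if'' check, had you completed it past the ``$\dots$'', would have detected this. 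The paper avoids the issue by taking $M=\mathrm{diag}(\lambda^{-1},\lambda)$, computing $a'_i=\lambda^{n-2i}a_i$ from $f(MX)$, and then applying $M^{-1}$ to $(x,z)$ to get the factor $\lambda^{2l-m}$; the correct combined exponent is $nd-2w_l+2l-m$, which does vanish under the stated hypotheses. Once you flip the sign in either the coefficient or the variable action, your argument goes through verbatim and coincides with the paper's.
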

\begin{proof}
Write $$C = \sum_{i=0}^mC_ix^iz^{m-i}.$$
If $M=\binom{\lambda^{-1}\: 0}{0\:\:\:\:\: \lambda}\in\mathbb{T}$, then 
$a'_i = \lambda^{n-2i}a_i$
 and $C_l(a') = \sum_{i=1}^{l}\prod_{j=0}^{n}{a'_j}^{\epsilon_{i,j,l}}$ (resp.  $C_l(a) = \sum_{i=1}^{l}\prod_{j=0}^{n}{a_j}^{\epsilon_{i,j,l}}$) with $l\in\{0,\dots,m\}$. 
We have
\begin{align*}
C_l(a') = \sum_{i=1}^{l}\prod_{j=0}^{n}\lambda^{(n-2j)\epsilon_{i,j,l}}{a_j}^{\epsilon_{i,j,l}} & = \sum_{i=1}^{l}\lambda^{\sum_{j=0}^n(n-2j)\epsilon_{i,j,l}}\prod_{j=0}^{n}{a_j}^{\epsilon_{i,j,l}}\\
& = \sum_{i=1}^{l}\lambda^{nd -2\sum_{j=0}^nj\epsilon_{i,j,l}}\prod_{j=0}^{n}{a_j}^{\epsilon_{i,j,l}}.
\end{align*}
Since $M$ also acts on $(x,z)$ by $M^{-1}.(x,z)$, we get
$$M.C(a,X) = C(a'_0,\dots,a'_n,\lambda x,\lambda^{-1} z) =  \sum_{l=0}^m\lambda^{2l -m}C_l(a')x^lz^{m-l}.$$
\textbf{Suppose that $C$ is a covariant}, then $M.C = C$, so for all $l\in\{0,\dots,m\}$
$$C(a) = \lambda^{2l-m}C_l(a').$$
This implies that for all $l$ and for all $i$
$$nd -2\sum_{j=0}^nj\epsilon_{i,j,l}+2l-m = 0.$$
In particular, $\sum_{j=0}^nj\epsilon_{i,j,l} -l$ does not depend on the $l$ or $i$.
So, we can define $w$ by putting $w =\sum_{j=0}^nj\epsilon_{i,j,l}-l$. 
We get then $nd - 2w= m$. 
Moreover, the integer $w$ is the weight of $C_0$. 
The weight of $C_l$ is $\sum_{j=0}^nj\epsilon_{i,j,l} = w+l$.\\ \\
\textbf{Conversely we want to prove that $C_l(a) = \lambda^{2l-m}C_l(a')$}. 
Since the weight of $C_l$ is $\sum_{j=0}^nj\epsilon_{i,j,l}$, we have $w =\sum_{j=0}^nj\epsilon_{i,j,l} -l$. 
Moreover, $nd-2w =m$, hence :
$$nd -2\sum_{j=0}^nj\epsilon_{i,j,l}+2l-m = 0.$$
This implies that:
$$C_l(a) = \lambda^{2l-m}C_l(a').$$
So, $C$ is a covariant under the action of $\mathbb{T}$.
\end{proof}
\noindent
Since, starting from $C_0$, we get the weight of $C_i$ of the covariant $C$, we can say that $w$ is the weight of $C$. 

\label{subsubsecCovDerMultiple}
\begin{theo}
\label{propPetitCov}
\label{theoPetitCov}
Let
${\cal Q} = \sum_{i=0}^{m_0}{\cal Q}_ix^iz^{m_0-i}$
be a covariant of $f$ of order $m_0$, degree $d_0$ and weight $\omega_0$. 
Let $l$ be an integer smaller than $m_0/2$ and $p$. 
The polynomial 
$$C = \frac{1}{z^{l}}\frac{\partial^l {\cal Q}}{\partial x^l}$$
is a covariant of $f$ if and only if $m_0-l+1$ is congruent to $0$ modulo $p$. 
When $C$ is a non zero covariant, its order is $m_0-2l$ and its degree is $d_0$.
\end{theo}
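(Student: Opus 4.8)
The plan is to reduce covariance under $\SL_2(k)$ to covariance under the three generating subgroups $\mathbb{T}$, $\Gamma$ and $\Gamma^*$, exactly as announced after the counterexample to Hilbert's theorem. Since $\mathcal{Q}$ is already a covariant, it is in particular invariant (up to the determinant factor, which is trivial on these unipotent and diagonal subgroups) under each of $\mathbb{T}$, $\Gamma$, $\Gamma^*$; so I would derive the corresponding transformation behaviour of $C=z^{-l}\partial^l\mathcal{Q}/\partial x^l$ under each subgroup separately and see exactly when $C$ is genuinely a covariant.

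First I would treat $\mathbb{T}$: by Lemma \ref{lemCovCondTorique} applied to $\mathcal{Q}$, the coefficients $\mathcal{Q}_i$ are isobaric of weight $\omega_0+i$ with $nd_0-2\omega_0=m_0$. Differentiating $l$ times in $x$ and dividing by $z^l$ sends the monomial $\mathcal{Q}_i x^i z^{m_0-i}$ to $\frac{i!}{(i-l)!}\mathcal{Q}_i x^{i-l}z^{m_0-i-l}$, so the new coefficient of $x^{j}$ (for $0\le j\le m_0-2l$) is $\frac{(j+l)!}{j!}\mathcal{Q}_{j+l}$, which is isobaric of weight $\omega_0+j+l$. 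Writing $C=\sum_{j=0}^{m_0-2l}C_j x^j z^{m_0-2l-j}$ with $m=m_0-2l$ and $w=\omega_0+l$, one checks $nd_0-2w=m_0-2l=m$, so by the converse direction of Lemma \ref{lemCovCondTorique}, $C$ is always a covariant under $\mathbb{T}$ (provided it is non-zero, which requires $l\le m_0/2$, one of the hypotheses). Note also that the hypothesis $l<p$ guarantees $l!\neq 0$ in $k$, so the leading binomial factors do not vanish identically.

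Next, the heart of the argument is $\Gamma^*$ (or $\Gamma$). Here I would use that $\mathcal{Q}$ being a covariant is equivalent, infinitesimally on the unipotent part, to an identity of the shape $\mathbf{D}\mathcal{Q}=x\,\partial\mathcal{Q}/\partial z$ — but since Hilbert's theorem fails in small characteristic I cannot just invoke Theorem \ref{theoCovTroisCond}; instead I would work with the actual group action of $M=\binom{1\ 0}{t\ 1}$ (resp. $\binom{1\ t}{0\ 1}$), expand $M.\mathcal{Q}=\mathcal{Q}$ as a polynomial identity in $t$, and then compute $M.C$ directly. The key computation is: $M$ acts on $(x,z)$ by $X\mapsto M^{-1}X$, so $z$ transforms simply (it is fixed by $\Gamma$, or $x$ is fixed by $\Gamma^*$), and the operator $z^{-l}\partial_x^l$ nearly commutes with the substitution. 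The obstruction term that appears is precisely the coefficient of the highest power of $x$ that gets ``pushed out of range'' when we differentiate $l$ times and truncate — this produces a factor proportional to $(m_0-l+1)$ (the number of surviving top-degree terms, counted with the relevant binomial/Leibniz coefficient), and $C$ is a covariant under $\Gamma^*$ (equivalently under all of $\SL_2$, once $\mathbb{T}$ and one unipotent are handled, by symmetry of the roles of $x$ and $z$ together with the $\mathbb{T}$-covariance already established) if and only if that factor vanishes in $k$, i.e. $m_0-l+1\equiv 0\pmod p$.

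The main obstacle I expect is the bookkeeping in this middle step: carefully tracking how $z^{-l}\partial_x^l$ interacts with the non-homogeneous substitution $x\mapsto x$, $z\mapsto z - tx$ (for $\Gamma^*$) inside each monomial $\mathcal{Q}_i x^i z^{m_0-i}$, organizing the result by powers of $t$, and isolating the single ``boundary'' term whose coefficient is the integer $m_0-l+1$ (up to a unit). One must be careful that the division by $z^l$ is legitimate, i.e. that $\partial_x^l\mathcal{Q}$ is actually divisible by $z^l$ in $k[a][x,z]$ — this follows because $\mathcal{Q}$ is homogeneous of degree $m_0$ in $(x,z)$ and each monomial has $z$-degree $m_0-i\ge m_0-(m_0-l)=l$ after we note that differentiating $l$ times kills the terms with $i<l$ — so the quotient is again a polynomial, homogeneous of degree $m_0-2l$. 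Once the boundary coefficient is identified and shown to be a unit times $(m_0-l+1)$, the stated equivalence and the order/degree count ($m=m_0-2l$, $d=d_0$) drop out immediately.
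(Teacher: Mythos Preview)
Your overall strategy---reduce to the three subgroups $\mathbb{T}$, $\Gamma$, $\Gamma^*$---matches the paper, but you have misplaced where the arithmetic condition $p\mid(m_0-l+1)$ enters, and this comes from a concrete error in your $\mathbb{T}$ step.

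You claim that $\partial_x^l\mathcal{Q}$ is always divisible by $z^l$, arguing that ``differentiating $l$ times kills the terms with $i<l$'' and hence each surviving monomial has $z$-degree $m_0-i\ge l$. That inequality is backwards: differentiation leaves the terms with $i\ge l$, whose $z$-degree is $m_0-i\le m_0-l$, not $\ge l$. The problematic monomials are those with $i\in\{m_0-l+1,\dots,m_0\}$: they survive $\partial_x^l$ but carry $z^{m_0-i}$ with $m_0-i<l$, so after dividing by $z^l$ they have negative $z$-exponent. For $C$ to be a genuine polynomial homogeneous of degree $m_0-2l$ in $(x,z)$ one needs exactly that the falling factorials $i(i-1)\cdots(i-l+1)$ vanish in $k$ for all such $i$; under the hypothesis $l<p$ this happens iff $p\mid(m_0-l+1)$, since then $m_0-l+1$ lies in every product $\{i-l+1,\dots,i\}$. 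So the condition is \emph{equivalent} to $C$ being homogeneous of the right degree, i.e.\ (by Lemma~\ref{lemCovCondTorique}) to $\mathbb{T}$-covariance. This is precisely how the paper obtains both directions of the iff.

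Once that is straightened out, the unipotent pieces are handled separately and asymmetrically, not ``by symmetry of the roles of $x$ and $z$'': the operator $z^{-l}\partial_x^l$ manifestly breaks that symmetry. For $\Gamma$ (which fixes $z$ and shifts $x$), $\partial_x$ commutes with the substitution and $z^{-l}$ is unchanged, so $C$ is $\Gamma$-covariant unconditionally---no boundary term appears here. For $\Gamma^*$ (which fixes $x$ and shifts $z$), the paper performs the explicit chain-rule expansion you sketch, but uses $p\mid(m_0-l+1)$ as a \emph{hypothesis} to make every obstruction term vanish; it does not extract the condition from a single ``boundary coefficient equal to a unit times $(m_0-l+1)$''. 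Your plan to get necessity from $\Gamma^*$ is therefore both misplaced (necessity already comes from $\mathbb{T}$) and not what the $\Gamma^*$ computation actually yields.
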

\begin{rem}
The operator was already known by Hilbert (cf \cite[th p.103]{MR1266168}). 
But the way to use it in small characteristic with the previous condition is new.
\end{rem}
\noindent
To show that $C$ is a covariant under the action of $\SL_2(k)$, we consider the action of $\mathbb{T}$, $\Gamma$ and $\Gamma^*$. 
First we analyse the action of this three sub-groups on $C$ and then we give the proof of the theorem \ref{theoPetitCov}.
Write again $$C = \sum_{i=0}^mC_ix^iz^{m-i}.$$

\paragraph{Action of $\mathbb{T}$.}
\label{parT}
By definition of $C$, $C_0,\dots,C_m$ are homogeneous functions of degree $d_0$ and isobaric of weight $l+\omega_0,l+\omega_0+1,\dots,l+\omega_0+m$. 
We express $C$ according to the coefficients of $\cal Q$
$$C = \sum_{i=l}^{m_0}\frac{i!}{l!}{\cal Q}_ix^{i-l}z^{m_0-i-l}.$$
If $p|(m_0-l+1)$, then for all $i\in\{m_0-l+1,\dots,m_0\}$,
$$p\:|\:\frac{i!}{l!}.$$
In this case, if $C$ is non zero, $C$ is a homogeneous polynomial of degree $m = m_0-2l$. 
Moreover, ${\cal Q}$ being a covariant, lemma\fullref{lemCovCondTorique} ensures that $m_0 = nd_0 - 2w_0$. 
The order of $C$ can be writen $m = nd_0 - 2(\omega_0 +l)$. 
So, by lemma \fullref{lemCovCondTorique}, $C$ is a covariant under the action of $\mathbb{T}$.
The converse is also given by lemma \fullref{lemCovCondTorique}. 
The condition $p|(m_0-l+1)$ is then a necessary and sufficient condition for $C$ to be a covariant under the action of $\mathbb{T}$. 

\paragraph{Action of $\Gamma$.}
\label{parGamma}
We set $g: (a,X)\to(a',(x + \mu z, z))$. 
We aim at showing that $C\circ g = C$. 
Meaning
$$(\frac{1}{z^{l}}\frac{\partial^l {\cal Q}}{\partial x^l})\circ g = \frac{1}{z^{l}}\frac{\partial^l {\cal Q}}{\partial x^l}.$$
This is equivalent to
$$\frac{\partial^l {\cal Q}}{\partial x^l}\circ g = \frac{\partial^l {\cal Q}}{\partial x^l}.$$
However, ${\cal Q}$ being a covariant under the action of $\Gamma$, we have 
$$\frac{\partial{\cal Q}}{\partial x} = \frac{\partial{\cal Q}\circ g}{\partial x}.$$
Moreover, 
$$\frac{\partial{\cal Q}\circ g}{\partial x} = \frac{\partial{\cal Q}}{\partial x}\circ g.$$
By immediate recurrence, we obtain the desired result.
So, $C$ is covariant under the action of $\Gamma$.

\paragraph{Action of $\Gamma^*$.}
\label{parGammaStar}
We set $g: (a,X)\to(a',(x,\mu x +z))$. 
We want to prove that $C\circ g = C$. 
Meaning
$$(\frac{1}{z^{l}}\frac{\partial^l {\cal Q}}{\partial x^l})\circ g = \frac{1}{z^{l}}\frac{\partial^l {\cal Q}}{\partial x^l}.$$
This is equivalent to
$$z^l(\frac{\partial^l {\cal Q}}{\partial x^l})\circ g = (\mu x +z)^l\frac{\partial^l {\cal Q}}{\partial x^l}.$$
Using the fact that $\displaystyle\frac{\partial^l{\cal Q}}{\partial x^l}\circ g = \sum_{i=0}^{l}\binom{l}{i}\cdot (-\mu)^{l-i} \cdot \frac{\partial^l{\cal Q}}{\partial x^i\partial^{l-i}z}$, this amounts to show
$$z^l\sum_{i=0}^{l}\binom{l}{i}\frac{\partial^l{\cal Q}}{\partial x^i\partial z^{l-i}}(-\mu)^{l-i} = \sum_{i=0}^{l}\binom{l}{i}\frac{\partial^l {\cal Q}}{\partial x^l}\mu^{l-i}x^{l-i}z^i.$$
This is still equivalent to
$$\sum_{i=0}^{l}\binom{l}{i}\mu^{l-i}\Big[\frac{\partial^l {\cal Q}}{\partial x^l}x^{l-i}z^i + (-1)^{l-i+1}\frac{\partial^l{\cal Q}}{\partial x^i\partial z^{l-i}}z^l\Big] = 0$$
or that for all $i\in\{0,\dots,l\}$
$$\frac{\partial^l {\cal Q}}{\partial x^l}x^{l-i}z^i + (-1)^{l-i+1}\frac{\partial^l{\cal Q}}{\partial x^i\partial z^{l-i}}z^l = 0.$$
Assume that $p|(m_0-l+1)$. 
We develop the left hand of the expression and we get
\begin{align*}
&0 = \sum_{j=l}^{m_0}{\cal Q}_jj(j-1)\dots(j-l+1)x^{j-i}z^{m_0+i-j} + \\
&(-1)^{l-i+1}\sum_{j=i}^{m_0-l+i}{\cal Q}_jj(j-1)\dots(j-i+1)x^{j-i}(m_0-j)(m_0-j-1)\dots(m_0-j-l+i+1)z^{m_0-j+i}.
\end{align*}
For all  $j \in\{m_0-l,\dots,m\}$, $p|j(j-1)\dots(j-l+1)$. 
In the same way, for all  $j \in\{m_0-l,\dots,m_0-l+i\}$, $p|j(j-1)\dots(j-i+1)$. 
So, the sums shall stop at $m_0-l$. 
For all $j\in\{i,\dots,l-1\}$, $p|(m_0-j)(m_0-j-1)\dots(m_0-j-l+i+1)$. 
So the two sums begin at $l$. 
Finally, since $p|(m_0-l+1)$, 
$$(m_0-j)(m_0-j-1)\dots(m_0-j-l+i+1) \equiv (l-1-j)(l-2-j)\dots(-j+i) \equiv (-1)^{l-i}(j-i)\dots(j-l+1)\pmod p$$
This proves the nullity of the expression. 
So it has been shown that if $p|(m_0-l+1)$ then $C$ is invariant under the action of $\Gamma^*$. 

\begin{proof}[Proof of theorem \fullref{propPetitCov}]
According to paragraph `Action of $\mathbb{T}$' (p. \pageref{parT}), $C$ is a covariant under the action of $\mathbb{T}$ if and only if $p|(m_0-l+1)$. 
According to paragraph`Action of  $\Gamma$' (p. \pageref{parGamma}), $C$ is a covariant under the action of $\Gamma$. 
According to paragraph `Action of  $\Gamma^*$' (p. \pageref{parGamma}), if $p|(m_0-l+1)$ then $C$ is a covariant under the action of $\Gamma^*$. 
Since $\SL_2(k)$ is generated by $\mathbb{T}$, $\Gamma$ and $\Gamma^*$, if $p|(m_0-l+1)$, $C$ is a covariant under the action of $\SL_2(k)$.

Conversely, assume that $C$ is a covariant under the action of $\SL_2(k)$. 
The invariance under the action of $\mathbb{T}$ shows that $p|(m_0-l+1)$.
\end{proof}
\label{newcov}
Thanks to this theorem, we can construct new covariants which do not appear in characteristic zero.
\begin{itemize}
\item For the binary quartic in characteristic  $3$ (cf. example \ref{exCov4}), we find $c_{0,1}$ (${\cal Q} = f$ and $l=2$) and $c_{4,3}$ (${\cal Q} = c_{6,3}$ and $l=1$) ; 
\item For the binary sextic in characteristic $3$ (cf. \cite[sec 5.2.6]{UlpatRovetta}), we find the covariant $q$ (${\cal Q} = f$ and $l=1$) of degree $1$ and order $4$ ;
\item For the binary sextic in characteristic  $5$  (cf. \cite[sec 5.2.6]{UlpatRovetta} and \cite[sec 6.6.2.3]{UlpatRovetta}), we find the covariant $c$ (${\cal Q} = f$ and $l=2$) of degree $1$ and order $2$ ;
\item For the binary octavic in characteristic $5$, we find the same invariants, $C = a_4$ (${\cal Q} = f$ and $l=4$) of degree $1$ identified by Basson and Lercier. 
\end{itemize}
It is tempting to wonder whether it is posible, in small characteristic, to get a generator system of covariant by adding this new operation. 
A first difficulty is the following. 
Let ${\cal Q}_1,\dots,{\cal Q}_r$ be covariants, $l_1,\dots,l_r$ be integers such that
$$C_i = \frac{1}{z^{l_i}}\frac{\partial^{l_i} {\cal Q}_i}{\partial x^{l_i}}$$
are the covariants obtained by this new operation starting from ${\cal Q}_i$. 
Let ${\cal Q}$ be an element of $k[{\cal Q}_1,\dots,{\cal Q}_r,C_1,\dots,C_r]$. 
The expression $\frac{1}{z^{l}}\frac{\partial^l {\cal Q}}{\partial x^l}$ is not necessarily in $k[{\cal Q}_1,\dots,{\cal Q}_r,C_1,\dots,C_r]$.
For instance, over $k=\FF_5$, for $r=1$ consider only the sextic binary form ${\cal Q}_1 = f$. 
The covariant of $f$
$$\frac{1}{z^{3}}\frac{\partial^3 f^2}{\partial x^3} = (a_3a_6 + a_4a_5)x^6 + (4a_2a_6 + 4a_3a_5 + 2a_4^2)x^5z + (a_0a_4 + a_1a_3 + 3a_2^2)xz^5 + (4a_0a_3 + 4a_1a_2)z^6$$
is not in the algebra generated by $f$ and $C_1 = \frac{1}{z^{2}}\frac{\partial^2 f}{\partial x^2}$. 
Indeed, if it was in this algebra, it would be a linear combination of $f^2$, $fC_1$ and $C_1^2$ since these are the only terms of degree $2$ in $a_i$. 
However the terms that do not depend on $x$ in these three covariants are $a_0^2$, $2a_0a_2$ and $4a_2^2$. 
We cannot generate the coefficient $(4a_0a_3 + 4a_1a_2)$.
So, it is difficult to see when the new operation will saturate the algebra. 
Actually, we even have an example where it does not. 
Consider the invariant $c_{0,6}\in{\cal I}_4$ in characteristic $3$ of the example \ref{exCov4}. 
It cannot be obtained using our new operator. 
To get it by our operation, it would have to be the derivative $l$ times starting from a certain covariant of order $m$ and degree $6$. 
The integers $m$ and $l$ have to verify $l<m/2$, $m-2l=0$ and $m-l+1$ is a multiple of $3$.
So we get this invariant by taking the second derivative of a certain covariant $c_{4,6}$ of order $4$ and  degree $6$. 
However, by performing the computations, we find that the algebra of covariant degree less than $6$ generated by our operator on the reduction of covariants of characteristic zero is generated by $c_{0,1}$, $f=c_{4,1}$, $c_{4,3}$ and $c_{6,3}$. 
The only two options for $c_{4,6}$ are $c_{0,1}^5c_{4,1}$ and $c_{0,1}^3c_{4,3}$. 
These two options do not give $c_{0,6}$.
$\:$\\ \\
\noindent
{\bf Acknowledgement.}
It is a pleasure to thank Christelle Klein Scholz for his relecture.

\bibliographystyle{alpha}
\bibliography{Bib}

\appendix
\section{Appendix}
\label{AppendixSymetrisation}
\begin{verbatim}
symmetrisation:=function(C,P4)
	P:=Parent(C);
	F:=BaseRing(P);
	r:=Rank(P);
	P2:=PolynomialRing(F,r-1);
	P3:=PolynomialRing(F,r-1);
	f:=hom<P -> P2 | [P2.i : i in [1..r-1]] cat [1]>;
	x:=P.r;
	L:=f(Coefficients(C,x));
	L2:=[];
	for s in L do
		b,t:=IsSymmetric(s,P3);
		if b then 
			L2:=L2 cat [t];
		else
			 return "not symmetric";
		end if;
	end for;
	f2:=hom< P3 -> P4 | [P4.i : i  in [1..r-1]]>;
	return &+[f2(L2[i])*(-P4.r)^(i-1) : i in [1..#L2]];
end function;


FF:= Rationals();
// FF:= GF(3);
A<x1,x2,x3,x4,x>:= PolynomialRing(FF,5);
// Order 0
t0 := (x2-x1)*(x4-x3);
t1 := (x4-x1)*(x3-x2);
//Order 2
u0 := (x-x1)*(x-x2)*(x4-x3);
u1 := (x-x1)*(x-x4)*(x3-x2);
u2 := (x-x3)*(x-x4)*(x2-x1);
//Order 4
f := (x-x1)*(x-x2)*(x-x3)*(x-x4);
M1:= Matrix(FF,[
               [0,-1,0,0,0,0],
               [-1,0,0,0,0,0],
               [0,0,-1,-1,-1,0],
               [0,0,1,0,0,0],
               [0,0,0,1,0,0],
               [0,0,0,0,0,1]
               ]);
// representation of the action of the cycle (123456)

M2:=Matrix(FF,[
               [-1,0,0,0,0,0],
               [1,1,0,0,0,0],
               [0,0,1,0,0,0],
               [0,0,0,1,1,0],
               [0,0,0,0,-1,0],
               [0,0,0,0,0,1]
               ]);
// representation of the action of the cycle (12)

GT :=  MatrixGroup<6, FF| [M1,M2]>;
// Group generated by the matrices M1 and M2

R:=InvariantRing(GT);
// Invariant ring of the group G on a set of 6 points

F:=FundamentalInvariants(R);
// Invariants who generate the ring R

L:=[Evaluate(g,[t0,t1,u0,u1,u2,f]) : g in F];
L2:=MinimalAlgebraGenerators(L);
P4<a1,a2,a3,a4,z>:=PolynomialRing(FF,5);
L3:=[symmetrisation(C,P4) : C in L2];
// L3 is the list of elements of B_{reg,sym} expressed with the coefficients of ai f

[Factorization(C): C in L3];

\end{verbatim}

\noindent\\ \\ \\ \\ \\ \\
  Florent Ulpat Rovetta\\[0.2cm]%
  Institut de Math{\'e}matiques de Marseille,\newline %
  UMR 6206 du CNRS,\newline %
  Luminy, Case 907,\newline %
  13288 Marseille\\[0.2cm]
  France\\[0.2cm]
  florent.rovetta@gmail.com
\end{document}